\newtheorem{theorem}{Theorem}[section] % 1st argument is your name for it
\newtheorem{lemma}[theorem]{Lemma}     % 2nd argument is what is printed
\title[Martingale Problem and Quadratic Family]% end with percent
 {Martingale Problem and Quadratic Family} % This is the full title of the paper
\author{Haoming Wang}
\begin{document}
\maketitle

\begin{abstract}Assuming uniqueness of the martingale problem for Markov processes of generators $q_t$ in a quadratic family like
\[q_t(i,j) = a_t(i) q_0(i,j)^2 + b_t(i) q_0(i,j) - \frac{a_t(i)}{N} \sum_k q_0(i,k)^2,\]
where $a_t(i),b_t(i)$ are predictable processes, $N$ is the number of states, and $q_0$ represents the generator of a stationary reference Markov process which satisfies $q_0(i,j)>0$ for all $i,j$, we obtain the sufficient and necessary conditions for the Girsanov transformation.
\end{abstract}

\section{Introduction}

A natural question in the theory of Markov processes is to recover a probability measure $\mathbf{P}$ from the martingale problem. Specifically, fix a finite state space $\{1,2,\dots,N\}$, an initial distribution $\nu = (\nu_1,\dots,\nu_N)$ with $\nu_i \geq 0$ and $\sum_{i=1}^N \nu_i = 1$, and a sequence of transition matrices $P_n$. The problem is to construct a probability measure $\mathbf{P}$ on the path space such that, for every $z \in \mathbb{R}^N$, the process
$$
z_{X_n} - z_{X_0} - \sum_{k=1}^n (P_k z)_{X_{k-1}}
$$
is a $\mathbf{P}$-martingale. Once such a measure exists, the process $(X_n)$ becomes an inhomogeneous Markov chain with transition matrices $P_n$. A continuous-time analogue of this formulation will be presented in the sequel.

This problem has a unique solution for the discrete-time Markov chain by recursive construction of finite-dimensional distributions. However, as had been pointed out by \cite{StroockVaradhan1969}, solutions to the martingale problem for the continuous-time Markov process are generally not unique. In this paper, we aim to prove 
such a result which will be briefly stated as follows. Let $\mathcal S$ be a locally compact $T_2$ space equipped with Borel measure $\mu$, and $\nu$ be an initial distribution satisfying $\nu(x)\geq 0$ and $\int_{\mathcal S}\nu(x)\mu(dx)=1$. For each compactly supported test function $f \in C_c(\mathcal S)$, suppose there exists a unique probability measure $\mathbf P$ such that
$$f(X_t) - f(X_0) - \int_0^t (q_s f)(X_s)\,ds$$
is a $\mathbf P$-martingale, where the generator $q_t$ belongs to the quadratic family
$$q_t(i,j) \;=\; a_t(i) q_0(i,j)^2 + b_t(i) q_0(i,j) - \frac{a_t(i)}{N}\sum_k q_0(i,k)^2,  \qquad a_t(i), b_t(i) \in \mathcal P,$$
with $\mathcal P$ denoting the predictable $\sigma$-algebra. Then the following conclusion holds. For a stationary Markov process $X_t$ with generator $q_{0}$, $q_0(i,j)>0$, $\mu$-a.e. under $\mathbf P_0$ and $\mathbf P \ll \mathbf P_0$, the Girsanov transformation (*) holds with Dol\'eans-Dade exponential $\mathcal E_t(\cdot)$,
\[\frac{d\mathbf P}{d\mathbf P_0}\Big|_{\mathscr F_t}= \mathcal E_t\left(\int_0^t \int_{\mathcal S} 
\log\Big(\frac{q_s(X_{s-},y)}{q_0(X_{s-},y)}\Big)  \big(N(ds,dy)-q_0(X_{s-},dy)ds\big)\right), \tag{*}\]
if and only if $X_t$ is a Markov process with generator $q_{t}$ under $\mathbf P$. In (*), $\mathscr F_t = \sigma(X_s:0\le s \le t)$ is the smallest $\sigma$-algebra generated by $X_s$ up to time $t$, and $N(dt,dy)$ is the jump measure of $X_t$ defined by
$$N(\omega; [0,t] \times A) := \sum_{0 < s \le t} \mathbf 1{\{X_s(\omega) \neq X_{s-}(\omega), X_s(\omega) \in A\}}, \quad A \in \mathscr B(\mathcal S),$$
where $X_{s-}$ denotes the left limit of $X$ at $s$.

The proof will be given in the next two sections, first for the discrete time. We need to mention that the main technique we adopt is not new. Instead, the martingale representation theorem is well-known for Markov processes and can be seen in many expositions like \cite{clark1970representation}, \cite{buiculescu1982representation}, \cite{Sokol03092015}, and \cite{Criens2024OnTR}. The transformation of measures for the Markov process was systematically developed by \cite{girsanov1960}, \cite{dynkin1965markov}, \cite{DoleansDade1970}, \cite{novikov1972identity}, and \cite{kazamaki1994continuous}. Here, for the Markov process on the finite state space, this formula (*) reads
\[\left.\frac{d\mathbf P}{d\mathbf P_0}\right|_{\mathscr F_t} = \exp\left( \int_0^t \sum_{j \neq X_s} \log \frac{q_{s}(X_s, j)}{q_{0}(X_s, j)} \, dN_s^j - \int_0^t \sum_{j \neq X_s} \left( q_s(X_s, j) - q_{0}(X_s, j) \right) ds \right), \tag{C} \]
where $N_s^j$ is the number of jumps into state $j$ by time $s$. Without loss of generality, we will restrict our attention to an enumerable set $\mathcal S$ but focus more on the time index set $\mathbb N$ or $[0,\infty)$.

\section{Main Theorem: Discrete time}

\begin{theorem}
    Let $X_{n}$ be a Markov chain with $\sigma(X_{0},X_{1},\dots, X_{n})$-predictable transition probability matrix $P_n$ under $\mathbf P$. Suppose $\mathbf{P}_{0}$ is a probability measure with respect to which $X_{n}$ is the stationary Markov chain with transition probability matrix $P_0$ satisfying $P_{0}(i.j) >0$ and the same initial distribution, then $\mathbf{P} \ll \mathbf{P}_{0}$ and the Girsanove transformation (D) holds,
    \begin{equation*}   \left.d\mathbf{P}/d\mathbf{P}_{0}\right|_{\sigma(X_{0},X_{1},\dots, X_{n})}=  %\frac{\nu_{X_0}}{\nu_{0,X_0}}   
    \exp\left(\sum_{k=1}^{n} \log \frac{P_{k}(X_{k-1},X_{k})}{P_{0}(X_{k-1},X_{k})}
    \right). \tag{D}    \end{equation*}
    Conversely, if there exists $\mathbf{P}_{0}$ as described above and $\mathbf{P}\ll \mathbf{P}_{0}$, then there exists a $\sigma(X_{0},X_{1},\dots,X_{n})$-predictable transition probability matrix $P_n$ such that $X_n$ is a Markov chain with transition probability matrix $P_n$ under $\mathbf P$ and (D) holds. 
\end{theorem}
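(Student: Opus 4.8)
The plan is to realise the right-hand side of (D) as the density process of $\mathbf P$ with respect to $\mathbf P_0$ on the filtration $\mathscr F_n:=\sigma(X_0,\dots,X_n)$, exploiting that, since $\{1,\dots,N\}$ is finite and $P_0(i,j)>0$, every ratio $P_k(i,j)/P_0(i,j)$ is finite and $\mathbf P_0$ gives positive mass to every cylinder compatible with the common initial law $\nu$, so all Radon--Nikodym derivatives become explicit. Throughout I read ``$X_n$ is a Markov chain with predictable transition matrix $P_n$ under $\mathbf Q$'' as: $P_n$ is an $\mathscr F_{n-1}$-measurable stochastic matrix and $\mathbf E_{\mathbf Q}[f(X_n)\mid\mathscr F_{n-1}]=(P_nf)(X_{n-1})$, and both measures share the law $\nu$ of $X_0$.

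\emph{Direct implication.} Set $Z_n:=\prod_{k=1}^n P_k(X_{k-1},X_k)/P_0(X_{k-1},X_k)$ (convention $0/0=0$). First I check that $(Z_n)$ is a nonnegative $\mathbf P_0$-martingale with $Z_0=1$: conditioning on $\mathscr F_{n-1}$, using that $P_n$ is $\mathscr F_{n-1}$-measurable while $X_n$ has $\mathbf P_0$-conditional law $P_0(X_{n-1},\cdot)$, gives $\mathbf E_0[Z_n/Z_{n-1}\mid\mathscr F_{n-1}]=\sum_j P_n(X_{n-1},j)=1$ on $\{Z_{n-1}>0\}$, while on $\{Z_{n-1}=0\}$ both sides vanish. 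Then an induction on $n$ yields $\mathbf E_0[Z_n\mathbf 1_B]=\mathbf P(B)$ for all $B\in\mathscr F_n$ --- equivalently $\mathbf E_0[Z_n g]=\mathbf E_{\mathbf P}[g]$ for bounded $\mathscr F_n$-measurable $g$: the base case $n=0$ is the equality of the initial laws; the inductive step, applied to $g=g_1(X_0,\dots,X_{n-1})h(X_n)$, uses the $\mathbf P_0$-Markov property to reduce the inner conditional expectation to $(P_nh)(X_{n-1})$, and then the induction hypothesis together with the $\mathbf P$-Markov property to recover $\mathbf E_{\mathbf P}[g_1h(X_n)]$. Since $P_0(i,j)>0$ makes every ratio in $Z_n$ finite, this identifies $Z_n=d\mathbf P/d\mathbf P_0|_{\mathscr F_n}$, and in particular gives $\mathbf P\ll\mathbf P_0$ on each $\mathscr F_n$ together with (D).

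\emph{Converse.} Let $Z_n:=d\mathbf P/d\mathbf P_0|_{\mathscr F_n}$, a nonnegative $\mathbf P_0$-martingale; equality of the initial laws forces $Z_0=1$ $\mathbf P_0$-a.s. Write $Z_n=\phi_n(X_0,\dots,X_n)$ and define, on $\{Z_{n-1}>0\}$,
\[
P_n(X_{n-1},j):=\frac{\phi_n(X_0,\dots,X_{n-1},j)}{Z_{n-1}}\,P_0(X_{n-1},j),
\]
setting $P_n(X_{n-1},\cdot):=P_0(X_{n-1},\cdot)$ on the $\mathbf P$-null event $\{Z_{n-1}=0\}$, and the remaining rows to any fixed stochastic matrix (only the $X_{n-1}$-row is ever used). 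Then $P_n$ is $\mathscr F_{n-1}$-measurable, has nonnegative entries because $Z_n\ge0$ and $P_0>0$, and its $X_{n-1}$-row sums to $\mathbf E_0[Z_n\mid\mathscr F_{n-1}]/Z_{n-1}=1$ by the martingale property, so $P_n$ is a genuine predictable transition matrix. The $\mathbf P_0$-Markov property gives $\mathbf E_0[Z_n\mathbf 1_{\{X_n=j\}}\mid\mathscr F_{n-1}]=\phi_n(X_0,\dots,X_{n-1},j)P_0(X_{n-1},j)=Z_{n-1}P_n(X_{n-1},j)$, whence for $A\in\mathscr F_{n-1}$ one gets $\mathbf P(A\cap\{X_n=j\})=\mathbf E_0[Z_n\mathbf 1_A\mathbf 1_{\{X_n=j\}}]=\mathbf E_0[\mathbf 1_AZ_{n-1}P_n(X_{n-1},j)]=\mathbf E_{\mathbf P}[\mathbf 1_AP_n(X_{n-1},j)]$; that is, $\mathbf P(X_n=j\mid\mathscr F_{n-1})=P_n(X_{n-1},j)$, so $X_n$ is a Markov chain with predictable transition matrix $P_n$ under $\mathbf P$. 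Finally, substituting $j=X_n$ in the displayed formula gives $P_n(X_{n-1},X_n)/P_0(X_{n-1},X_n)=Z_n/Z_{n-1}$ on the $\mathbf P$-support, so the telescoping product $\prod_{k=1}^n P_k(X_{k-1},X_k)/P_0(X_{k-1},X_k)$ equals $Z_n/Z_0=Z_n$, which is (D).

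\emph{Main obstacle.} There is no deep difficulty in discrete time; the only care lies in the null-set bookkeeping --- on the $\mathbf P_0$-charged, $\mathbf P$-null set $\{Z_{n-1}=0\}$ and on states outside $\operatorname{supp}\nu$ the transition matrix must still be \emph{defined} as an honest stochastic matrix even though it does not affect the $\mathbf P$-dynamics, and the telescoping identity for the density is only asserted $\mathbf P_0$-a.s.\ (with $\log 0=-\infty$). No martingale representation theorem is needed here; this case mainly fixes the template for the continuous-time argument.
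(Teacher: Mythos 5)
Your proposal is correct, and in both directions it takes a somewhat different route from the paper. For sufficiency the paper defines the same density candidate $Z_n$, verifies the one-step martingale identity $\sum_j \frac{P_n(X_{n-1},j)}{P_0(X_{n-1},j)}P_0(X_{n-1},j)=1$ exactly as you do, but then concludes $\mathbf P=\mathbf P'$ by appealing to uniqueness of the discrete-time martingale problem (together with a lemma characterizing the Markov property via the martingales $M_n^z$); your induction on finite-dimensional distributions, $\mathbf E_0[Z_n g]=\mathbf E_{\mathbf P}[g]$ for $g=g_1(X_0,\dots,X_{n-1})h(X_n)$, carries out that uniqueness argument explicitly and is self-contained. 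For necessity the difference is more substantial: the paper proves a martingale representation lemma, writing the increments $Z_k-Z_{k-1}$ as $\sum_j G_{k,j}\bigl(\mathbf 1\{X_k=j\}-P_0(X_{k-1},j)\bigr)$ with $\mathscr F_{k-1}$-measurable coefficients, and then solves for $P_k(i,j)=P_0(i,j)\bigl(1+Z_{k-1}^{-1}(G_{k,j}-\sum_l G_{k,l}P_0(i,l))\bigr)$; you bypass the representation entirely by reading off $P_n(X_{n-1},j)=\phi_n(X_0,\dots,X_{n-1},j)P_0(X_{n-1},j)/Z_{n-1}$ directly from the conditional density, which is more elementary and makes the row-sum and positivity checks immediate. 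What the paper's detour buys is the template for the continuous-time argument, where the predictable representation theorem is genuinely needed; what your route buys is a shorter, cleaner discrete-time proof with careful bookkeeping on $\{Z_{n-1}=0\}$ and outside $\operatorname{supp}\nu$ --- points the paper glosses over. One small remark: your $P_n(X_{n-1},\cdot)$ row depends on the whole history $(X_0,\dots,X_{n-1})$ and not only on $X_{n-1}$, but this is exactly what the theorem's ``$\sigma(X_0,\dots,X_n)$-predictable transition matrix'' permits, and the paper's $G_{k,j}$ have the same feature, so this is not a defect.
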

\begin{proof} Sufficiency. By uniqueness of the martingale problem for discrete-time Markov chains, it suffices to prove $\mathbf P \ll \mathbf P_0$ if $X_n$ has the Markov property under both $\mathbf P$ and $\mathbf P_0$, and is stationary under $\mathbf P_0$.  Let 
\[Z_n = \exp\left(\sum_{k=1}^{n} \log \frac{P_{k}(X_{k-1},X_{k})}{P_{0}(X_{k-1},X_{k})}
    \right),\]
Then $Z_n$ is a $\mathbf P_0$-martingale by the stationarity of $X_n$ under $\mathbf P_0$, which can be seen via the followiong calculation
$$\mathbf E_{0}\left[\frac{P_{n}(X_{n-1},X_{n})}{P_{0}(X_{n-1},X_{n})}\Bigm|\mathscr F_{n-1}\right]   =\sum_{i=1}^N\frac{P_{n}(X_{n-1},i)}{P_{0}(X_{n-1},i)}P_0(X_{n-1},i)=1.   $$
Define a probability measure $\mathbf P'$ on $\sigma(X_0,\dots,X_n)$ by $d\mathbf P' = Z_n \, d\mathbf P_0$. Intuitively, $\mathbf P$ and $\mathbf P'$ coincide on $\sigma(X_{0},\dots,X_{n})$. Since $Z_n$ is bounded and hence uniformly integrable, this identification extends to the full $\sigma$-field $\sigma\big(\bigcup_n \sigma(X_{0},\dots,X_{n})\big)$. Thus $\mathbf P = \mathbf P'$, and $(D)$ holds.

%Necessity. We need the following lemma.
    \begin{lemma}
        Let $(X_n, P_n)$ be an adapted process, where $P_n$ is an $\mathscr F_{n-1}$-measurable random transition probability matrix with row sum 1. In order that for every $z \in \mathbb R^N$ the process
        \begin{equation}
            M_n^z = z_{X_n} - z_{X_0} - \sum_{k=1}^{n} \big( (P_k - I)z\big)_{X_{k-1}}
            \label{eq: Markov martingale}
        \end{equation}
        is an  $\mathscr F_n$-martingale, it is sufficient and necessary that $X_n$ is a Markov chain with transition matrix $P_n$ with respect to $\mathscr F_n$. 
        \end{lemma}
    \begin{proof} Suppose $X_n$ is a Markov chain with transition matrices $P_n$ with respect to $\mathscr F_n$. Then for every $n$ and $j$,
    $$\mathbf P(X_{n} = j \mid \mathscr F_{n-1}) = P_{n, X_{n-1}, j}.$$
    Or equivalently, for every $z \in \mathbb R^N$,
    $$\mathbf E[z_{X_{n}} \mid \mathscr F_{n-1}] = (P_{n} z)_{X_{n-1}}.$$
    Now consider $M_n^z$. To show it is a martingale, we compute
    $$M_{n}^z = z_{X_{n}} - z_{X_0} - \sum_{k=1}^{n} \big( (P_k-I) z\big)_{X_{k-1}}.$$
    Thus,
    $$M_{n}^z - M_{n-1}^z = z_{X_{n}} - z_{X_{n-1}} -\big( (P_{n}-I) z\big)_{X_{n-1}} = z_{X_{n}} - (P_{n} z)_{X_{n-1}}.$$
    Taking conditional expectation,
    $$\mathbf E[M_{n}^z - M_{n-1}^z \mid \mathscr F_{n-1}] = \mathbf E[z_{X_{n}} \mid \mathscr F_{n-1}] - (P_{n} z)_{X_{n-1}} = 0.$$
    Hence, $\mathbf E[M_{n}^z \mid \mathscr F_{n-1}] = M_{n-1}^z$, so $M_n^z$ is a martingale.

    Conversely, suppose $M_n^z$ is a martingale for every $z \in \mathbb R^N$. Then
    $$\mathbf E[M_{n}^z - M_{n-1}^z \mid \mathscr F_{n-1}] = 0.$$
    But
    $$M_{n}^z - M_{n-1}^z = z_{X_{n}} - (P_{n} z)_{X_{n-1}},$$
    so
    $$\mathbf E[z_{X_{n}} \mid \mathscr F_{n-1}] = (P_{n} z)_{X_{n-1}}.$$
    This holds for all $z \in \mathbb R^N$. In particular, choose $z = (z_1,z_2,\dots,z_N)$ to be the indicator vector: $z_i = \mathbf 1{\{i=j\}}$. Then $z_{X_{n}} = \mathbf 1{\{X_{n} = j\}}$, and
    $$(P_{n} z)_{X_{n-1}} = P_{n, X_{n-1}, j}.$$
    Thus,
    $$\mathbf P(X_{n} = j \mid \mathscr F_{n-1}) = P_{n, X_{n-1}, j}.$$
    This is exactly the Markov property with transition matrices $P_n$ with respect to $\mathscr F_n$. \end{proof}

    Necessity, Since $\mathbf{P} \ll \mathbf{P}_0$, we define the likelihood ratio process
    $$Z_n = \mathbf E_0 \left[ \frac{d\mathbf{P}}{d\mathbf{P}_0} \Bigm| {\sigma(X_0, X_1, \dots, X_n)}\right].$$
    The process $Z_n$ is a $\mathbf{P}_0$-martingale, and $Z_n > 0$ $\mathbf{P}_0$-a.s. For a fixed \( n \ge 1 \), define for each \( j \),
    \[    \Delta_{n,j} := \mathbf{1}{\{X_n = j\}} - P_0(X_{n-1}, j).\]

    \begin{lemma}\label{lem: delta-basis} For every \( \mathscr{F}_n \)-measurable random variable \( Y_n \) satisfying \( \mathbf{E}_0[Y_n \mid \mathscr{F}_{n-1}] = 0 \), there exist \( \mathscr{F}_{n-1} \)-measurable coefficients \( G_{n,j} \) such that
    \[Y_n = \sum_{j=1}^N G_{n,j}\Delta_{n,j}, \qquad \mathbf{P}_0\text{-a.s.}.\]
    and on each atom \( \{X_{n-1} = i\} \), the coefficients \( G_{n,j} \) are uniquely determined by a linear system.
    \end{lemma}

\begin{proof}
Consider an atom \( A_i := \{X_{n-1} = i\} \) for some \( i\). On \( A_i \), define the random vector \( (Y_n(i,j))_{j=1}^N \) by setting \( Y_n(i,j) = Y_n \cdot \mathbf{1}{\{X_n = j\}} \) for each \( j \). Note that \( Y_n(i,j) \) represents the value of \( Y_n \) on the event \( \{X_n = j\} \cap A_i \). The condition \( \mathbf{E}_0[Y_n \mid \mathscr{F}_{n-1}] = 0 \) implies that
\[\sum_{j=1}^N  P_0(i,j) Y_n(i,j) = 0,\]
since the conditional expectation given \( \mathscr{F}_{n-1} \) reduces to averaging over \( X_n \) with weights \( P_0(i,j) \) on \( A_i \).

Now, consider the vectors \( \Delta_{n,j}(i, \cdot) \) for \(j\), where \( \Delta_{n,j}(i, k) = \mathbf{1}{\{k = j\}} - P_0(i,j) \) for each \(k\). These vectors span the subspace
\[V_i = \left\{ v \in \mathbb{R}^N : \sum_{j=1}^N  P_0(i,j) v_j = 0 \right\},\]
because any vector \( v \in V_i \) can be written as a linear combination of the vectors \( \Delta_{n,j}(i, \cdot) \). Specifically, the system of equations
\[Y_n(i, k) = \sum_{j=1}^N  G_{n,j}(i) \left[ \mathbf{1}{\{k = j\}} - P_0(i,j) \right], \qquad k = 1,2,\dots, N,\]
has a solution for coefficients \( G_{n,j}(i) \) since the right-hand side covers all vectors in \( V_i \). The solution is unique up to the constraints of the subspace.

Since this holds for each atom \( A_i \), we can define \( \mathscr{F}_{n-1} \)-measurable coefficients \( G_{n,j} \) by setting \( G_{n,j} = G_{n,j}(i) \) on \( A_i \). Then, the decomposition
\[Y_n = \sum_{j=1}^N G_{n,j} \, \Delta_{n,j}\]
holds \( \mathbf{P}_0 \)-almost surely, as required.
\end{proof}

For each $k \leq n$, set $Y_k = Z_k - Z_{k-1}$. Since $(Z_k)$ is a martingale, $\mathbf{E}_0[Y_k \mid \mathscr{F}_{k-1}] = 0$. By Lemma \ref{lem: delta-basis}, there exist $\mathscr{F}_{k-1}$-measurable coefficients $(G_{k,j})_{j=1}^N$ such that
$$Y_k = \sum_{j=1}^N G_{k,j}\,\Delta_{k,j}, 
\qquad \Delta_{k,j} = \mathbf{1}{\{X_k=j\}} - P_0(X_{k-1},j),$$
which holds $\mathbf P_0$-almost surely. Intuitively, the variables $\Delta_{k,j}$ represent centered indicators of the transition into state $j$, so the above identity is a martingale representation of $Y_k$. 

Next, we identify the transition probabilities $P_k(i,j)$ under $\mathbf P$. By the likelihood ratio property, we have under $\mathbf P_0$ the recursion
$$Z_k = Z_{k-1}\frac{P_{k}(X_{k-1},X_k)}{P_0(X_{k-1},X_k)},$$
so that comparing this expression with the martingale representation of $Y_k$ and restricting to the event $\{X_{k-1}=i,\,X_k=j\}$,
$$Z_{k-1}\!\left(\frac{P_{k}(i,j)}{P_0(i,j)}-1\right) = G_{k,j} - \sum_{l=1}^N G_{k,l}P_0(i,l).$$
Therefore, whenever $Z_{k-1}>0$, the transition probabilities under $\mathbf P$ are given explicitly by
$$P_{k}(i,j) = P_0(i,j)\left(1+\frac{1}{Z_{k-1}}\Big(G_{k,j}-\sum_{l=1}^N G_{k,l}P_0(i,l)\Big)\right).$$

It remains to check that these coefficients indeed form a stochastic matrix. Fixing $i$ and summing over $j$, we obtain
$$
\sum_{j=1}^N P_{k}(i,j)
= \sum_{j=1}^N P_0(i,j) 
+ \frac{1}{Z_{k-1}}\!\left(\sum_{j=1}^N P_0(i,j)G_{k,j} - \sum_{l=1}^N G_{k,l}P_0(i,l)\right).
$$
The second term vanishes because both expressions inside coincide, leaving
$$\sum_{j=1}^N P_0(i,j)  = 1.$$
Thus each row sums to one, and non-negativity holds by construction, since the $P_k(i,j)$ arise as conditional probabilities. Consequently, $P_k$ is a valid transition matrix adapted to the natural filtration. In particular, the process $X_n$ is a Markov chain under $\mathbf{P}$, and the Girsanov transform (D) follows.  This complete the proof.  \end{proof}

\section{Main Theorem: Continuous time}

\begin{theorem}
Let \(X_t\) be a continuous-time Markov process with $\sigma(X_s:0 \le s \le t)$-predictable generator \(q_t\) under \(\mathbf{P}\). Suppose \(\mathbf{P}_0\) is a probability measure with respect to which \(X_t\) is a stationary Markov process with generator \(q_0\) satisfying \(q_0(i,j) > 0\) for all \(i \neq j\), and the same initial distribution. Further assume the martingale problem has unique solutions for each $q_t$ in the quadratic family
\[q_t(i,j) = a_t(i) q_0(i,j)^2 + b_t(i) q_0(i,j) - \frac{a_t(i)}{N} \sum_k q_0(i,k)^2, \quad a_t(i),b_t(i) \in \mathcal P.\]
Then \(\mathbf{P} \ll \mathbf{P}_0\) and the Girsanov transformation (C) holds.

Conversely, if there exists \(\mathbf{P}_0\) as described above and \(\mathbf{P} \ll \mathbf{P}_0\), then there exists a $\sigma(X_s:0 \le s \le t)$-predictable generator \(q_t\) such that \(X_t\) is a Markov process with $\sigma(X_s:0 \le s \le t)$-predictable process \(q_t\) under \(\mathbf{P}\) and (*) holds.
\end{theorem}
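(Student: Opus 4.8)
The plan is to transpose the discrete-time argument to continuous time, with the jump measure $N(ds,dy)$ and its $\mathbf P_0$-compensator $\widehat N(ds,dy):=q_0(X_{s-},dy)\,ds$ playing the role of the one-step transition matrix, stochastic integration against $N-\widehat N$ replacing the finite sums over states, and the martingale representation theorem for pure-jump Markov processes (available from the representation literature cited in the introduction, since the martingale problem for $q_0$ --- the member $a\equiv0,\ b\equiv1$ of the quadratic family --- is well posed) playing the role of Lemma~\ref{lem: delta-basis}. Throughout, the phrase ``$X$ is a Markov process with predictable generator $q_t$ under $\mathbf Q$'' is read as: $f(X_t)-f(X_0)-\int_0^t(q_sf)(X_s)\,ds$ is a $\mathbf Q$-martingale for every $f\in C_c(\mathcal S)$, equivalently the $\mathbf Q$-compensator of $N$ equals $q_s(X_{s-},dy)\,ds$.

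\emph{Sufficiency.} Exactly as in discrete time, the assumed uniqueness of the martingale problem for $q_t$ in the quadratic family reduces matters to showing $\mathbf P\ll\mathbf P_0$ with the stated density whenever $X$ solves the martingale problem for $q_t$ under $\mathbf P$ and is the stationary $q_0$-process under $\mathbf P_0$. I would let $Z_t$ denote the right-hand side of (*), equivalently the explicit exponential in (C). Since $\widehat N$ is the $\mathbf P_0$-compensator of $N$, the integral inside the exponential is a $\mathbf P_0$-local martingale and $Z_t$ is a nonnegative $\mathbf P_0$-local martingale with $Z_0=1$. The next --- and key --- step is to promote this to a true martingale, i.e.\ $\mathbf E_0[Z_t]=1$: on the (locally) finite state space, with $q_0(i,j)>0$ and $a_t,b_t$ locally bounded, the jump intensities are bounded on compact time intervals, $X$ makes only finitely many jumps on $[0,t]$, $Z_t$ is then an honest finite product of the jump ratios $q_s(X_{s-},X_s)/q_0(X_{s-},X_s)$ times a bounded exponential compensator, and a Novikov-type criterion for marked point processes yields $\mathbf E_0[Z_t]=1$. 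Then $d\mathbf P'|_{\mathscr F_t}:=Z_t\,d\mathbf P_0$ defines a consistent family and hence a measure $\mathbf P'$ on $\mathscr F_\infty$; the Girsanov theorem for random measures shows the $\mathbf P'$-compensator of $N$ is $\frac{q_s(X_{s-},y)}{q_0(X_{s-},y)}\,q_0(X_{s-},dy)\,ds=q_s(X_{s-},dy)\,ds$, so $X$ solves the martingale problem for $q_t$ under $\mathbf P'$; uniqueness forces $\mathbf P'=\mathbf P$ on every $\mathscr F_t$, and uniform integrability on each $[0,t]$ extends this to $\mathscr F_\infty$, giving $\mathbf P\ll\mathbf P_0$ and (C).

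\emph{Necessity.} Given $\mathbf P\ll\mathbf P_0$, I would set $Z_t=\mathbf E_0[\,d\mathbf P/d\mathbf P_0\mid\mathscr F_t\,]$, a nonnegative $\mathbf P_0$-martingale with $Z_0=1$, positive $\mathbf P$-a.s., so that the likelihood ratios below are $\mathbf P$-a.s.\ well defined. The martingale representation theorem for the $\mathbf P_0$-Markov process $(X_t,q_0)$ then provides a predictable $G$ with
\[Z_t=1+\int_0^t\int_{\mathcal S}G_s(y)\,\bigl(N(ds,dy)-\widehat N(ds,dy)\bigr),\]
the exact analogue of Lemma~\ref{lem: delta-basis}, with the compensated jump measure replacing the centered indicators $\Delta_{n,j}$. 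Nonnegativity of $Z$ and the jump identity $Z_s=Z_{s-}+G_s(X_s)$ give $1+G_s(X_s)/Z_{s-}\ge0$ at jump times, and --- crucially using $q_0(X_{s-},y)>0$ for all $y\ne X_{s-}$, so that $\widehat N$ has full spatial support --- in fact $1+G_s(y)/Z_{s-}\ge0$ for $\widehat N$-a.e.\ $(s,y)$. One can then define the predictable generator (which need not itself lie in the quadratic family)
\[q_s(X_{s-},j):=\Bigl(1+\tfrac{G_s(j)}{Z_{s-}}\Bigr)q_0(X_{s-},j)\ \ (j\ne X_{s-}),\qquad q_s(i,i):=-\sum_{j\ne i}q_s(i,j),\]
which is conservative with nonnegative off-diagonal entries, so that $\log(q_s/q_0)=\log(1+G_s/Z_{s-})$ and $Z_t$ is precisely the Dol\'eans--Dade exponential of $\int_0^t\int_{\mathcal S}\log\frac{q_s(X_{s-},y)}{q_0(X_{s-},y)}(N-\widehat N)(ds,dy)$, i.e.\ (*) holds. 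Since $d\mathbf P/d\mathbf P_0|_{\mathscr F_t}=Z_t$, the Girsanov theorem for random measures shows the $\mathbf P$-compensator of $N$ equals $q_s(X_{s-},dy)\,ds$, which is the statement that $X$ is a Markov process with the predictable generator $q_t$ under $\mathbf P$.

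The step I expect to be the main obstacle is the passage from local to true martingale for $Z_t$ in the sufficiency direction --- equivalently, ruling out loss of mass so that $\mathbf E_0[Z_t]=1$; this is precisely where finiteness of the state space, strict positivity of $q_0$, and local boundedness of $a_t,b_t$ are needed, and where care is required to invoke the correct point-process version of the Girsanov/Novikov theorem rather than the Brownian one. A secondary subtlety, in the necessity direction, is justifying rigorously that nonnegativity of $Z$ constrains the integrand $G_s(y)$ at \emph{every} $y\ne X_{s-}$ and not only at the states actually visited along the path --- again a consequence of $q_0(i,j)>0$ and the resulting full support of $\widehat N$ --- together with the standing applicability of the martingale representation theorem in the natural filtration of $X$ and the handling of the set where the density degenerates.
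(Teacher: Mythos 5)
Your proposal follows essentially the same route as the paper: sufficiency via the Dol\'eans--Dade exponential $Z_t$ of the compensated jump integral, the Girsanov theorem for random measures, and uniqueness of the martingale problem to identify $\mathbf P'$ with $\mathbf P$; necessity via the predictable representation theorem for the $(X,q_0)$ jump process, identifying the integrand with $Z_{s-}\bigl(q_s(X_{s-},j)/q_0(X_{s-},j)-1\bigr)$ and reading off the new generator, which is exactly the paper's Lemma~\ref{lem:continuous-generator-vector}. If anything, you are more explicit than the paper on the two delicate points you flag --- promoting $Z$ from local to true martingale, and using $q_0(i,j)>0$ to force $1+G_s(y)/Z_{s-}\ge 0$ for all $y$, not just the visited states --- both of which the paper passes over by assertion.
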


\begin{proof} {Sufficiency.} Assume that under \(\mathbf{P}\), \(X_t\) is a Markov process with generator \(q_t\), and under \(\mathbf{P}_0\), it is a stationary Markov process with generator \(q_0\) and the same initial distribution. Since \(q_0(i,j) > 0\) for all \(i \neq j\), the process \(X_t\) has positive jump rates under \(\mathbf{P}_0\), and the absolute continuity \(\mathbf{P} \ll \mathbf{P}_0\) follows from the fact that the finite-dimensional distributions under \(\mathbf{P}\) are absolutely continuous with respect to those under \(\mathbf{P}_0\). The likelihood ratio is given by (C).

Define the process
\[
Z_t = \exp\left( \int_0^t \sum_{j \neq X_s} \log \frac{q_s(X_s, j)}{q_0(X_s, j)} \, dN_s^j - \int_0^t \sum_{j \neq X_s} \left( q_s(X_s, j) - q_0(X_s, j) \right) ds \right).
\]
We show that \(Z_t\) is a \(\mathbf{P}_0\)-martingale. Consider the process
\[
U_t = \int_0^t \sum_{j \neq X_s} \log \frac{q_s(X_s, j)}{q_0(X_s, j)} \, (dN_s^j - q_0(X_s, j) ds).
\]
Then \(U_t\) is a local martingale under \(\mathbf{P}_0\) because \(dN_s^j - q_0(X_s, j) ds\) is a martingale increment. Moreover, since \(q_0(i,j) > 0\) and we assume boundedness of the generators, \(U_t\) satisfies the conditions for the stochastic exponential to be a martingale. Thus, the Dol\'eans-Dade exponential \(Z_t = \mathcal{E}(U)_t\) is a \(\mathbf{P}_0\)-martingale.

Now define a probability measure \(\mathbf{P}'\) on $\sigma(X_s:0 \le s \le t)$ by \(d\mathbf{P}' = Z_t \, d\mathbf{P}_0\). By Girsanov's theorem for jump processes (\cite{Sokol03092015}), under \(\mathbf{P}'\), the jump rate of \(X_t\) from state \(i\) to \(j\) becomes \(q_t(i,j)\). Since both \(\mathbf{P}\) and \(\mathbf{P}'\) are Markov measures with generator \(q_t\) and the same initial distribution, they coincide on $\sigma(X_s:0 \le s \le t)$ and moreover \(\mathbf{P} = \mathbf{P}'\) on the full algebra $\sigma(X_s: s\ge 0)$. This is a direct corollary of the following lemma.

\begin{lemma}\label{lem:continuous-generator-vector}
Let \(X_t\) be a continuous-time Markov process with respect to the filtration \(\sigma(X_s:0 \le s \le t)\) under \(\mathbf{P}\) with generator \(q_t\), and under \(\mathbf{P}_0\) it is a stationary Markov process with generator \(q_0\). Suppose \(\mathbf{P} \ll \mathbf{P}_0\). Then there exists a predictable matrix process \(K_t = (K_t(i,j))_{i,j =1}……N\) such that
\[
q_t = q_0 \odot (1 + K_t),
\]
where \(\odot\) denotes the Hadamard product (element-wise multiplication), and \(K_t\) satisfies that for each state \(i\),
\[
\sum_{j=1}^N q_0(i,j) K_t(i,j) = 0 \quad \text{on} \quad \{Z_{t-} > 0\},
\]
where \(Z_t = \left. {d\mathbf{P}}/{d\mathbf{P}_0} \right|{\sigma(X_s:0 \le s \le t)}\) is the likelihood ratio process.
\end{lemma}

\begin{proof}
    By the predictable representation theorem for jump processes under \(\mathbf{P}_0\) (see \cite{Boel1975}, \cite{davis1976representation}, \cite{elliott1976stochastic}), there exist predictable processes \(H_t^j\) for each \(j \in E\) such that
\[
Z_t = 1 + \int_0^t \sum_{j \neq X_s} H_s^j \, dM_s^j,
\]
where \(M_t^j = N_t^j - \int_0^t q_0(X_s, j) \, ds\) are \(\mathbf{P}_0\)-martingales.

From Girsanov's theorem for jump processes (see \cite{Sokol03092015}), the stochastic differential of \(Z_t\) satisfies
\[
dZ_t = Z_{t-} \sum_{j \neq X_{t-}} \left( \frac{q_t(X_{t-}, j)}{q_0(X_{t-}, j)} - 1 \right) dM_t^j.
\]
Comparing these two expressions, we obtain that for each \(j\),
\[
H_t^j = Z_{t-} \left( \frac{q_t(X_{t-}, j)}{q_0(X_{t-}, j)} - 1 \right) \quad \mathbf{P}_0\text{-a.s.}
\]
Now, define the matrix process \(K_t\) by setting for each \(i, j \in E\) with \(i \neq j\)
\[
K_t(i,j) = \frac{H_t^j}{Z_{t-}} \quad \text{on} \quad \{X_{t-} = i, Z_{t-} > 0\},
\]
and for \(i = j\), define \(K_t(i,i) = 0\) initially. Then for \(i \neq j\), we have
\[
q_t(i,j) = q_0(i,j) \left(1 + K_t(i,j)\right).
\]
For the diagonal entries, since generators have row sum zero, we compute
\[
q_t(i,i) = -\sum_{j \neq i} q_t(i,j) = -\sum_{j \neq i} q_0(i,j) (1 + K_t(i,j)) = q_0(i,i) - \sum_{j \neq i} q_0(i,j) K_t(i,j),
\]
where we used \(q_0(i,i) = -\sum_{j \neq i} q_0(i,j)\). Thus, to express \(q_t(i,i)\) in the Hadamard product form, we can define
\[
K_t(i,i) = \frac{ -\sum_{j \neq i} q_0(i,j) K_t(i,j) }{q_0(i,i) } \quad \text{on} \quad \{Z_{t-} > 0\},
\]
which is well-defined since \(q_0(i,i) < 0\). This ensures that
\[
q_t(i,i) = q_0(i,i) (1 + K_t(i,i)).
\]
Therefore, for all \(i, j\), we have
\[
q_t = q_0 \odot (1 + K_t).
\]
The predictability of \(K_t\) follows from the predictability of \(H_t^j\) and \(Z_{t-}\).

Finally, to verify the condition on \(K_t\), note that since both \(q_t\) and \(q_0\) have row sum zero,
\[
\sum_{j=1}^N q_t(i,j) = 0 = \sum_{j=1}^N q_0(i,j) (1 + K_t(i,j)) = \sum_{j=1}^N q_0(i,j) + \sum_{j=1}^N q_0(i,j) K_t(i,j).
\]
Since \(\sum_{j=1}^N q_0(i,j) = 0\), it follows that
\[
\sum_{j=1}^N q_0(i,j) K_t(i,j) = 0,
\]
as required. This completes the proof.
\end{proof}

\begin{lemma}\label{lem: Dynkin lemma} Let \((X_t, q_t)\) be an adapted process, where \(q_t\) is a $\sigma(X_s:0 \le s \le t)$-predictable generator matrix. Then for every function \(f\) on the state space, the process
\begin{equation}
    M_t^f = f(X_t) - f(X_0) - \int_0^t (q_s f)(X_s) ds
    \label{eq: markov martingale continuous case}
\end{equation}is an \(\mathscr{F}_t\)-martingale if and only if \(X_t\) is a Markov process with \(\mathscr{F}_t\)-predictable generator \(q_t\).\end{lemma}
\begin{proof}
If \(X_t\) is Markov with generator \(Q_t\), then by Dynkin's formula, \(M_t^f\) is a martingale. Conversely, if \(M_t^f\) is a martingale for all \(f\), then for any time \(t\) and state \(j\), choose \(f\) to be the indicator function \(f(x) = \mathbf{1}_{\{x = j\}}\). Then
\[
\mathbf{E}[f(X_{t+h}) - f(X_t) \mid \mathscr{F}_t] = \mathbf{E}[\mathbf{1}_{\{X_{t+h} = j\}} \mid \mathscr{F}_t] = \int_t^{t+h} \mathbf{E}[(q_s f)(X_s) \mid \mathscr{F}_t] ds.
\]
Dividing by \(h\) and taking \(h \to 0\), we obtain
\[
\lim_{h \to 0} \frac{1}{h} \mathbf{P}(X_{t+h} = j \mid \mathscr{F}_t) = q_t(X_t, j),
\]
which implies the Markov property with generator \(q_t\).
\end{proof}

{Necessity.} Assume that \(\mathbf{P} \ll \mathbf{P}_0\) and that \(X_t\) is a Markov process with respect to \(\mathscr{F}_t = \sigma(X_s: 0 \le s \le t)\) under \(\mathbf{P}\). By Lemma \ref{lem:continuous-generator-vector} and uniqueness of the martingale problem, the likelihood ratio process must take the form (C).

Now, to confirm that \(X_t\) is indeed a Markov process with generator \(q_t\) under \(\mathbf{P}\), Since we have explicitly constructed \(q_t\) from the Girsanov transformation and the likelihood ratio \(Z_t\) is a \(\mathbf{P}_0\)-martingala, by Lemma \ref{lem: Dynkin lemma}, the process \(M_t^f\) can be shown to be a \(\mathbf{P}\)-martingale by applying Girsanov's theorem so that the compensated process under \(\mathbf{P}_0\) becomes a martingale under \(\mathbf{P}\). This verifies the Markov property with generator \(q_t\). Therefore, the proof is completed.
\end{proof}

\oneappendix % use \appendix if you have more than one appendix

\begin{acknowledgements}\label{ackref}
In memory of my grandma Lei who passed away at COVID-19.
\end{acknowledgements}

\bibliographystyle{abbrvnat}
\bibliography{bibtex}

\affiliationone{Haoming Wang\\
   School of Mathematics \\
   Sun Yat-sen University \\
   Guangzhou, China
   \email{wanghm37@mail2.sysu.edu.cn}}

\end{document}